\documentclass[12pt]{amsart}

\newcommand{\GAP}{{\sf GAP}}
\newcommand{\KBMAG}{{\sf KBMAG}}
\newcommand{\N}{{\mathbb N}}
\newcommand{\Z}{{\mathbb Z}}
\newcommand{\Geo}{\mathrm{Geo}}
\newcommand{\pre}{\mathrm{pre}}
\newcommand{\suf}{\mathrm{suf}}
\newcommand{\sub}{\mathrm{sub}}

\newcommand{\supp}{\mathrm{supp}}

\newcommand{\GW}{\mathrm{GW}}
\newcommand{\geo}{\Geo(G,X)}
\newcommand{\LT}{\mathrm{LT}}

\newcommand{\SSgp}{\mathrm{SS}}

\newcommand{\ifof}{if and only if\ }
\newcommand{\te}{\!=\!}

\def\klt#1{#1-locally testable}
\def\klty#1{#1-local testability}

\usepackage{amsmath,amssymb}
\newtheorem{theorem}{Theorem}[section]

\newtheorem{proposition}[theorem]{Proposition}
\newtheorem{lemma}[theorem]{Lemma}

\newtheorem{corollary}[theorem]{Corollary}

\newenvironment{mylist}{\begin{list}{}{
\setlength{\parskip}{0mm}
\setlength{\topsep}{2mm}
\setlength{\parsep}{0mm}
\setlength{\itemsep}{0mm}
\setlength{\labelwidth}{7mm}
\setlength{\labelsep}{3mm}
\setlength{\itemindent}{0mm}
\setlength{\leftmargin}{12mm}
\setlength{\listparindent}{6mm}
}}{\end{list}}

\begin{document}
\title{Groups whose geodesics are locally testable}

\author[S.~Hermiller]{Susan Hermiller}
\address{Department of Mathematics\\
        University of Nebraska\\
         Lincoln NE 68588-0130, USA}
\email{smh@math.unl.edu}

\author[Derek F.~Holt]{Derek F.~Holt}
\address{Mathematics Institute\\
      University of Warwick  \\
       Coventry CV4 7AL, UK  }
\email{dfh@maths.warwick.ac.uk}

\author[Sarah Rees]{Sarah Rees}
\address{School of Mathematics and Statistics\\
       University of Newcastle \\
       Newcastle NE1 7RU, UK  }
\email{Sarah.Rees@ncl.ac.uk}

\subjclass[2000]{20F65 (primary), 20F65, 68Q45 (secondary)}

\begin{abstract}
A regular set of words is ($k$-)locally testable if membership of a word
in the set is determined by the nature of its subwords of some bounded
length $k$. In this article we study groups for which the set of all geodesic
words with respect to some generating set is ($k$-)locally testable,
and we call such groups ($k$-)locally testable. 
We show that a group is \klt{1} if and only if it is free abelian.
We show that the class of ($k$-)locally testable groups
is closed under taking finite direct products.
We show also that a locally testable group
has finitely many conjugacy classes of torsion elements.

Our work involved computer investigations of specific groups, for which
purpose we implemented an algorithm in \GAP\ to compute a finite state automaton
with language equal to the set of all geodesics of a group (assuming that this
language is regular), starting from a shortlex automatic structure.
We provide a brief description of that algorithm.
\end{abstract}

\maketitle


\section{Introduction}\label{sec:intro}

Let $G$ be a finitely generated group and let $X$ be a finite symmetric
(that is, inverse-closed) generating set of $G$.
Let $\Geo(G,X)$ be the set of all
words over $X$ that label geodesic paths in the corresponding
Cayley graph for $G$.  For many classes of groups, including
word hyperbolic groups~\cite[Theorem 3.4.5]{ECHLPT},
abelian groups, geometrically finite hyperbolic groups
~\cite{NeumannShapiro}, 
Coxeter groups~\cite{How93},
and Garside groups~\cite{CharneyMeier}, there are generating
sets $X$ for which the language $\Geo(G,X)$ is regular.

Starting only from the assumption that $\Geo(G,X)$ is regular,
we know very little about $G$. We do however have the following result,
which applies in the more general situation when $\Geo(G,X)$ is recursive.

\begin{proposition} With the notation above, if $\Geo(G,X)$ is recursive and
$G$ has a recursively enumerable presentation, then $G$ has solvable
word problem.
\end{proposition}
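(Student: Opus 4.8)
The plan is to verify that the word problem of $G$ and its complement are both recursively enumerable, so that by the standard fact that a set is recursive exactly when it and its complement are recursively enumerable, the word problem is decidable. That the set $W := \{w \in X^* : w =_G 1\}$ is recursively enumerable is classical and uses only the recursively enumerable presentation: if $R$ is a recursively enumerable set of defining relators, then $w \in W$ precisely when $w$ is freely equal to some product of conjugates (by words over $X$) of elements of $R^{\pm 1}$, and one enumerates $W$ by systematically forming and freely reducing all such products. No geodesic information is needed for this half.

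For the complement I would use two facts: every element of $G$ has a geodesic representative over $X$, and, by hypothesis, membership in $\Geo(G,X)$ is decidable. Given $w \in X^*$, first observe that $L_w := \{u \in X^* : u =_G w\}$ is recursively enumerable, since $L_w = \{u : uw^{-1} \in W\}$ (writing $w^{-1}$ for the formal inverse word, which lies in $X^*$ since $X$ is symmetric) and $W$ is recursively enumerable. Now enumerate $L_w$ and test each listed word, in turn, for membership in $\Geo(G,X)$; stop as soon as a geodesic word $v \in L_w$ is found. The search terminates because a geodesic representative of $\bar w$ lies in $L_w$ and is eventually listed. Since $v$ is geodesic and $v =_G w$, we have $|v| = d(1,\bar v) = d(1,\bar w)$; hence $w =_G 1$ \ifof the word $v$ produced is empty, and this gives a decision procedure for the word problem. (Equivalently: run, in parallel, the enumeration of $W$ and the search of $L_w$ for a \emph{nonempty} geodesic word; precisely one of the two halts on input $w$.)

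I do not expect a genuine obstacle: the argument is just the combination of the recursively enumerable word problem with the computability of geodesics. The one point that must be handled with a little care is the termination of the search through $L_w$, which rests on the obvious but essential existence of geodesic representatives; and it is worth noting that the hypothesis is used only to decide geodesy, so recursive enumerability of $\Geo(G,X)$, dovetailed with the enumeration of $L_w$, would already suffice.
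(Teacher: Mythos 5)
Your proof is correct, and it uses the same two ingredients as the paper's proof (decidability of geodesy together with recursive enumerability of the set of words representing the identity), but it organizes them differently. The paper tests the current word for geodesy and, if the test fails, uses the enumeration of relators to find \emph{some} strictly shorter word $v$ with $v =_G w$, then repeats with $v$; termination comes from the strict decrease in length, and the loop stops at a geodesic word, which is empty \ifof $w =_G 1$. You instead enumerate the whole class $L_w$ of words equal to $w$ (recursively enumerable via $u \mapsto uw^{-1}$ and the r.e.\ word problem) and filter it through the geodesy test, with termination guaranteed because a geodesic representative of the element occurs in the enumeration; you then conclude in the same way by checking whether the geodesic found is empty. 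Both routes are valid, and the key point that needs care in yours --- termination of the search through $L_w$ --- is correctly settled by the existence of geodesic representatives. Your version also records a small strengthening that the paper's formulation does not: since you only need to \emph{find} a geodesic representative rather than decide geodesy of arbitrary words, recursive enumerability of $\Geo(G,X)$, dovetailed with the enumeration of $L_w$, already suffices, whereas the paper's length-descent procedure as written uses the full recursiveness hypothesis to recognize when to stop.
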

\begin{proof}
Let $w$ be a word over $X$. Since $\Geo(G,X)$ is recursive, we can decide
whether or not $w \in \Geo(G,X)$. If so, then $w =_G 1$ \ifof $w$ is the
empty word. If not, then there exists a word $v$ with $v =_G w$ and
$l(v) < l(w)$, and so $v^{-1}w$ is a relator of $G$. But since
$G$ has a recursively enumerable presentation, we can enumerate the relators
of $G$ and hence find such a $v$. We now repeat the process with $v$ in place 
of $w$.
\end{proof}

We are motivated by the question of whether there might be interesting
subclasses of the class of regular languages for which we could
say more about groups with $\Geo(G,X)$ in that subclass.
In~\cite{HermillerHoltRees}, we considered groups for which $\Geo(G,X)$ is
a {\em star-free} regular language
(see also ~\cite[Chapter 4, Definition 2.1]{Pin} for the definition),
and although we obtained some conditions under
which groups have this property, we were not able to obtain any
classifications. In~\cite{GHHR}, we considered groups satisfying the much
more restrictive hypothesis that $\Geo(G,X)$ is {\em locally excluding} for
some symmetric generating set $X$ of $G$, which means that there exists
a finite set $\mathcal{W}$ of words over $X$ with the property that
a word $w \in \Geo(G,X)$ \ifof $w$ does not have a subword equal to
a word in  $\mathcal{W}$. We proved that this is the case
\ifof $G$ is virtually free.

In this paper, we consider groups $G$ for which $\Geo(G,X)$ is a
{\em locally testable} language for some $X$. The locally testable
languages form a class 
lying between locally excluding and star-free regular languages.

Informally, where $k$ is a positive integer, a set of words is
\klt{$k$} if membership of a word in the set depends
on the nature of its subwords of length $k$.
By a {\em subword} of a word $a_1a_2\cdots a_n$, we mean either the
empty word or a contiguous substring  $a_i a_{i+1}\cdots a_j$ for some
$1 \le i \le j \le n$. 

Examples of locally testable sets are given by the sets of all geodesics
in both the free group on 2 generators and the free abelian 
group on 2 generators, using standard generating sets. 
The set of geodesics in the free group on generators $a,b$ is
\klt{2}, since a word in those generators is
geodesic if and only if it contains no subword equal to
any of $aa^{-1}$, $a^{-1}a$, $bb^{-1}$ or $b^{-1}b$.
The set of geodesics in the free abelian group on generators
$a,b$ is \klt{1} since a word in those
generators is geodesic if and only if it 
contains neither both $a$ and $a^{-1}$ nor both $b$ and $b^{-1}$.
Note that the first example is locally excluding but the
second is not. We can show similarly that the geodesics of
free groups and free abelian groups
on any number of generators are respectively 2- and \klt{1}.

A formal definition of a locally testable language is given in Section
\ref{sec:char}. 
We shall see that
membership of a word $w$ in such a language may
also depend on the prefix and suffix of the word of length $k-1$
as well as its subwords of length $k$. In other words, it depends on the
subwords of length $k$ in the word $\diamond w \diamond$, where $\diamond$ is
a symbol not lying in $X$.

From a language theoretic point of view, the locally testable
languages have been well-studied. Various characterizations,
provided by Brzozowksi and Simon, and by McNaughton,
are described in Section \ref{sec:char}.

The local testability of $\Geo(G,X)$ will certainly depend
on $X$ in general. For example, the free group on four generators has
$\Geo(G,X)$ locally excluding and hence locally testable when $X$ is a
set of free generators for $G$, but in~\cite{HermillerHoltRees} we
exhibited a generating set of size 6 for which $\Geo(G,X)$ is not star-free
and hence not locally testable.

We shall say that the group $G$ is \klt{$k$} if $\Geo(G,X)$ is \klt{$k$} for some
generating set $X$ of $G$, and $G$ is locally testable if it is \klt{$k$} for
some $k$. 

The 2-generator Artin groups $A_k$ are defined by the presentations
$$\langle\, a,b \mid (ab)^{k/2} = (ba)^{k/2}\, \rangle\ (k\ \mathrm{even}),$$
$$\langle\, a,b \mid (ab)^{(k-1)/2}a = (ba)^{(k-1)/2}b \, \rangle\ 
(k\ \mathrm{odd}).$$
It is proved in~\cite{MM06} that $\Geo(G,X)$ is regular with
$X = \{a,b,a^{-1},b^{-1}\}$. In fact it follows
directly from Proposition 4.3 of that paper that $\Geo(G,X)$ is \klt{$k$},
so this provides a further series of examples of
\klt{$k$} groups.

It can be shown that the kernel of the natural homomorphism of $A_n$ onto
the dihedral group of order $2n$ in which the images of $a$ and $b$ have
order 2 is a direct product of an infinite cyclic group
and a free group of rank $k-1$.  We conjecture that any locally testable
group is virtually a direct product of free groups.

We are able to prove a complete characterization of \klt{1} groups.
Specifically, in Section \ref {sec:1test}, we show the following.

\medskip

\noindent{\bf Theorem \ref{thm:1testfreeab}.} {\it
Let $G$ be a finitely generated group.  Then the following are equivalent.
\begin{enumerate}
\item $G$ is free abelian.
\item $G$ is \klt{1}.
\item There is a finite symmetric generating set $X$ for $G$
such that the syntactic semigroup associated to $\Geo(G,X)$ is idempotent.
\end{enumerate}
}

\medskip

In Section~\ref{sec:dirprod},
we prove the following.

\medskip

\noindent {\bf Theorem \ref{thm:ltprod}.} {\it
For any $k > 0$ the class of \klt{$k$} groups
is closed under taking finite direct products.
}

\medskip

In fact, this is deduced as a corollary of the more general result:

\medskip

\noindent {\bf Theorem \ref{thm:genprod}.} {\it
Let $\mathcal{L}$ be a class of languages that is closed under the
operations of union and taking inverse images under length preserving morphisms.
Then the class of $\mathcal{L}$-groups is closed under taking finite direct
products.
}

\medskip

In Section~\ref{sec:lttorconj}, we show that the local testability
restriction on the geodesics results in
an algebraic restriction on the group.

\medskip

\noindent{\bf Theorem \ref{thm:lttorconj}.}  {\it
A locally testable group
has finitely many conjugacy classes of torsion elements.
}

\medskip

Finally, in Section~\ref{sec:geowa}, we
describe a method which, upon input of a finite presentation
of a shortlex automatic group $G$ with symmetric generating 
set $X$, will attempt to construct a finite
state automaton with language equal to $\Geo(G,X)$.  This process is
guaranteed to succeed eventually if $\Geo(G,X)$ is a regular language. 
This has been implemented using the \GAP~\cite{GAP} interface 
to \KBMAG~\cite{KBMAG}.  

Using our program to construct a minimal automaton
for $\Geo(G,X)$ when this language is regular, we
can then use conditions equivalent to those
described in Section~\ref{sec:char} to
decide whether $\Geo(G,X)$ is also ($k$-)locally testable.
These conditions, as well as our results, are stated
throughout this paper in terms of the syntactic semigroup
of the regular language, but for computational purposes 
it is generally more efficient to work with the 
the transition semigroup of the minimal automaton $M$;
these two semigroups are isomorphic (see for example
\cite[p.~18]{Pin}).  It is shown in \cite{Trahtman}
that local testability can be tested in polynomial time from the input $M$.
We conclude the paper by listing some examples of the use of our program 
to test $\Geo(G,X)$ first for being regular and then for being
($k$-)locally testable.



\section{Definition and characterizations of local testability} \label{sec:char}


Let $X$ be any finite set.
Let $X^*$ be the free monoid over $X$, that is the set of
all strings over $X$, and let $X^+$ be the free semigroup,
the set of all non-empty strings.

Let $k > 0$ be a natural number.
For $u \in X^*$ of length at least $k$, let $\pre_k(u) $ be the prefix
of $u$ of length $k$, let $\suf_k(u) $ be the suffix
of $u$ of length $k$, and let $\sub_k(u) $ be the set
of all subwords of $u$ of length $k$.  If $l(u)<k$,
then we define $\pre_k(u)=u$, $\suf_k(u)=u$, and 
$\sub_k(u)=\emptyset$.
Two words $u,v \in X^*$ satisfy $u \sim_k v$ \ifof
$\pre_{k-1}(u)=\pre_{k-1}(v)$, $\suf_{k-1}(u)=\suf_{k-1}(v)$, 
and $\sub_{k}(u)=\sub_{k}(v)$.
A subset $L \subseteq X^*$
is defined to be {\it \klt{$k$}} \cite[p.~247]{BrzozowskiSimon}
if $L$ is a union of 
equivalence classes of $\sim_k$.  The set $L$ is called
{\it locally testable} if $L$ is \klt{$k$} for some $k$.

Let $\LT$ denote the class of locally testable languages and $\LT(k)$ the class
of all \klt{$k$} languages.
From the definition as a union of equivalence classes, it follows immediately
that $\LT(k)$ is closed under the Boolean operations of
union, intersection and complementation.  It also follows from
the definition that any \klt{$k$} language
is also \klt{$m$} for all $m \ge k$, and hence $\LT$ is also
closed under the three Boolean operations.

In Proposition \ref{prop:ltbool} we list two characterizations of
local testability due to Brzozowski and Simon, and McNaughton.

From the first,
it follows immediately that $\LT$ is strictly contained in
the class of regular languages over $X$.
(In fact it shows that locally testable languages are star-free.)

The second characterization involves the
syntactic semigroup of $L$, $\SSgp(L)$, which is defined to be
the quotient  \mbox{$X^+/\!\sim_L$}, where 
$\sim_L$ is the congruence defined as follows.
We have $u \sim_L v$ if and only if, for all words $s$ and $t \in X^*$,
$sut$ and $svt$ are either both in $L$ or both not in $L$.
Note that the syntactic semigroup should be distinguished from
the syntactic monoid, defined to be the quotient of $X^*$ by the same
congruence.
A semigroup $S$ is said to be {\em locally idempotent and commutative} if,
for every idempotent $e \in S$, 
$eSe$ is an idempotent commutative subsemigroup of $S$. 

\begin{proposition}\label{prop:ltbool}
{\rm(1)}~\cite[Theorem 2.1(iv)]{BrzozowskiSimon}
$\LT$ is the Boolean closure of the languages of the form
$X^* w X^*$, $w X^*$, and $X^* w$, where $w \in X^*$.

\smallskip
\noindent
{\rm(2)}~\cite[Theorem 6.2]{BrzozowskiSimon},
\cite[Main Theorem, p.~63]{McNaughton}
$L$ is locally testable if and only if its
syntactic semigroup $\SSgp(L)$ is locally idempotent
and commutative.
\end{proposition}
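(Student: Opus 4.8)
Both parts are classical --- (1) is due to Brzozowski and Simon, and (2) to McNaughton and to Brzozowski and Simon --- so for the paper one may simply cite \cite{BrzozowskiSimon} and \cite{McNaughton}; but here is how I would reconstruct the arguments. I would start from the observation that, writing $\phi\colon X^+\to\SSgp(L)$ for the syntactic morphism (so that $L=\phi^{-1}(\phi(L))$), a regular language $L$ lies in $\LT(k)$ \ifof $\sim_k$ refines $\sim_L$. The only nontrivial implication is that if $L$ is a union of $\sim_k$-classes then, for $u\sim_k v$, one has $sut\sim_k svt$ and hence the same $L$-membership for all $s,t\in X^*$, so that $u\sim_L v$; to check the first point, observe that $u\sim_k v$ either forces $u,v$ to be shorter than $k$, in which case comparing $(k-1)$-prefixes gives $u=v$, or forces both to have length $\ge k$, in which case $\sub_k(sut)=\sub_k(s\,\pre_{k-1}(u))\cup\sub_k(u)\cup\sub_k(\suf_{k-1}(u)\,t)$, together with the corresponding formulas for $\pre_{k-1}$ and $\suf_{k-1}$, shows $sut\sim_k svt$. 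This reduces both parts to understanding $\sim_k$.

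For part (1), inclusion $(\subseteq)$: each $\sim_k$-class is a finite Boolean combination of the three kinds of language. A class of words of length $\ge k$ with $(k-1)$-prefix $p$, $(k-1)$-suffix $s$, and set of $k$-subwords $T\subseteq X^k$ equals
\[
pX^*\cap X^*s\cap\bigcap_{w\in T}X^*wX^*\cap\bigcap_{w\in X^k\setminus T}\overline{X^*wX^*}\cap X^{\ge k},
\]
where $X^{\ge k}=\bigcup_{v\in X^k}vX^*$ is itself admissible; the remaining $\sim_k$-classes are singletons $\{u\}=uX^*\cap X^{\le|u|}$, with $X^{\le m}=\overline{\bigcup_{v\in X^{m+1}}vX^*}$. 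As $L$ is a finite union of such classes it lies in the Boolean closure. Inclusion $(\supseteq)$: $X^*wX^*$ lies in $\LT(|w|)$ and $wX^*$, $X^*w$ lie in $\LT(|w|+1)$; since each $\LT(k)$ is closed under the Boolean operations and $\LT(k)\subseteq\LT(k+1)$, every finite Boolean combination of these languages lies in $\LT$.

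For part (2), the implication $L\in\LT\Rightarrow\SSgp(L)$ locally idempotent and commutative I would obtain by passing to a quotient. Fix $k$ with $\sim_k\,\subseteq\,\sim_L$; then $\SSgp(L)$ is a homomorphic image of the finite semigroup $T:=X^+/\!\sim_k$. Given an idempotent $e$ of $T$, represent it by a word $c\in X^+$, so that $e=\overline{c^N}$ for every $N$; then every element of $eTe$ has the form $\overline{c^Nac^N}$, and for $N$ large the periodicity of $c^N$ gives
\[
c^Nac^Nc^Nac^N\sim_k c^Nac^N,\qquad c^Nac^Nc^Nbc^N\sim_k c^Nbc^Nc^Nac^N
\]
for all $a,b\in X^+$, which says exactly that $eTe$ is idempotent and commutative. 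Finally, the class of locally idempotent and commutative semigroups is closed under homomorphic images --- lift an idempotent of the image to one of the form $s^\omega$ upstairs --- so $\SSgp(L)$ has the property.

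The reverse implication, $\SSgp(L)$ locally idempotent and commutative $\Rightarrow L\in\LT$, is the hard half and the real obstacle --- this is in essence McNaughton's theorem. Setting $n=|\SSgp(L)|$, the goal is to produce a $k$ depending only on $n$ such that $u\sim_k v\Rightarrow\phi(u)=\phi(v)$; then $L=\phi^{-1}(\phi(L))$ is a union of $\sim_k$-classes, hence lies in $\LT(k)$. My plan would be: given a long word, strip off its length-$(k-1)$ prefix and suffix --- these are pinned down by $\pre_{k-1}$ and $\suf_{k-1}$ --- and show that $\phi$ of the long central part depends only on its set of $k$-subwords. The two ingredients are (a) a Ramsey/pigeonhole argument showing that every sufficiently long word has a factor $y$ with $\phi(y)$ idempotent, so that the central part breaks into blocks glued along a common idempotent $e$; and (b) the hypothesis, which makes $e\,\SSgp(L)\,e$ a semilattice, so that a product $es_1e\cdots es_re$ depends only on the set $\{es_ie\}$ --- this is what lets one reorder and duplicate $k$-subwords in the central part without changing $\phi$. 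Making all of this precise --- positioning the idempotent, controlling the two interfaces between the central part and the stripped prefix and suffix, and extracting the bound on $k$ --- is where the work lies, and for the present paper I would simply invoke \cite{McNaughton} and \cite{BrzozowskiSimon}.
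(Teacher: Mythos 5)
The paper offers no proof of this proposition at all---it simply cites Brzozowski--Simon and McNaughton---so your primary recommendation to cite matches the paper exactly. Your reconstruction is correct where it is complete (the congruence property of $\sim_k$, both inclusions in part (1), and the forward direction of (2) via the quotient of $X^+/\!\sim_k$), and you rightly identify the converse of (2) as the genuinely hard McNaughton theorem and defer it to the literature rather than pretending your sketch settles it.
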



Using Proposition \ref{prop:ltbool}, Brzozowski and Simon derived the following
characterization of
the locally testable languages. They stated it in terms of the associated
minimal automata and transition semigroups,
but we prefer to phrase it in terms of syntactic semigroups.

\begin{proposition}\cite[Theorem 6.2]{BrzozowskiSimon}  \label{prop:ltfsa}
A language $L$ is \klt{$k$} \ifof its syntactic semigroup $S$ satisfies the 
following:

For all $x,y,z \in X^*$ such that $l(x)=k-1$,
\begin{enumerate}
\item $xyxzx =_S xzxyx$, and
\item  whenever $xy=zx$, then 
$xy =_S xy^2$.
\end{enumerate}
\end{proposition}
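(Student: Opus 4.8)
The plan is to replace the proposition by the single statement that $\sim_k$ equals the congruence on $X^+$ generated by the relations appearing in (1) and (2). Begin by recording that $\sim_k$ is itself a congruence on $X^*$: a routine case analysis on lengths shows that the data $(\pre_{k-1},\suf_{k-1},\sub_k)$ of a product $sut$ is determined by the corresponding data of $s$, $u$ and $t$, so $u\sim_k v$ implies $sut\sim_k svt$ for all $s,t$. Let $\approx$ be the congruence on $X^+$ generated by all pairs $(xyxzx,xzxyx)$ together with all pairs $(xy,xy^2)$ for which $xy=zx$, where in both families $x,y,z$ range over $X^*$ with $l(x)=k-1$. Since $\sim_L$ is a congruence, $S=\SSgp(L)$ satisfies (1) and (2) precisely when every generator of $\approx$ lies in $\sim_L$, that is, precisely when $\approx\subseteq\sim_L$. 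On the other hand $L$ is \klt{$k$} precisely when $\sim_k\subseteq\sim_L$: if $L$ is a union of $\sim_k$-classes and $u\sim_k v$, then $sut\sim_k svt$ for every context, so $sut\in L\Leftrightarrow svt\in L$ and $u\sim_L v$; conversely, taking the empty context, $\sim_k\subseteq\sim_L$ forces $u\in L\Leftrightarrow v\in L$ whenever $u\sim_k v$, i.e. $L$ is a union of $\sim_k$-classes. So it suffices to prove the identity $\approx\,=\,\sim_k$ of congruences on $X^+$.

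The inclusion $\approx\subseteq\sim_k$ is the straightforward half. Assume $k\ge 2$ (for $k=1$, $x$ is the empty word and (1), (2) merely assert that $S$ is commutative and idempotent). If $l(x)=k-1$, then for any $u_1,u_2\in X^*$ no factor of length $k$ of $x u_1 x u_2 x$ can reach across the middle occurrence of $x$, since that occurrence already has length $k-1$; hence every such factor lies inside the prefix $x u_1 x$ or inside the suffix $x u_2 x$, giving $\sub_k(x u_1 x u_2 x)=\sub_k(x u_1 x)\cup\sub_k(x u_2 x)$, which is symmetric in $u_1$ and $u_2$. As $\pre_{k-1}$ and $\suf_{k-1}$ of both $xyxzx$ and $xzxyx$ equal $x$, we conclude $xyxzx\sim_k xzxyx$. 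For the second family, $xy=zx$ yields $xy^2=z(xy)=z^2 x$, so the word $xy^2$ has $xy$ as both a prefix and a suffix; the same windowing argument gives $\sub_k(xy^2)=\sub_k(xy)$, and the $(k-1)$-prefixes and suffixes again agree, so $xy\sim_k xy^2$. Thus all generators of $\approx$ lie in $\sim_k$.

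The reverse inclusion $\sim_k\subseteq\approx$ is where the difficulty lies. It is convenient to view a word $w$ with $l(w)\ge k$ as a walk in the de Bruijn graph whose vertices are the words in $X^{k-1}$ and which has one edge, from $\pre_{k-1}(e)$ to $\suf_{k-1}(e)$, for each word $e\in X^k$ occurring in $w$; this walk runs from $\pre_{k-1}(w)$ to $\suf_{k-1}(w)$ and traverses exactly the edges of $\sub_k(w)$. Words of length $<k$ lie in singleton $\sim_k$-classes, so the task is to show that any two walks with the same initial vertex, the same terminal vertex and the same \emph{set} of traversed edges represent $\approx$-equivalent words. Interpreted as rewriting moves in context, relation (1) transposes two consecutive loops based at a common vertex $x$, while relation (2) inserts, or deletes, a repetition of such a loop. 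The strategy is to use (2) to inflate both $u$ and $v$ to a common saturated form in which each length-$k$ subword is traversed a fixed large number of times, and then to use (1) to sort the loops into a canonical order, producing a normal form that depends only on the triple $(\pre_{k-1}(w),\suf_{k-1}(w),\sub_k(w))$. The main obstacle, and the bulk of the argument, is to make this precise: one must check that any word too long to be in normal form repeats a $(k-1)$-window in a position where (1) or (2) can be applied, handle with care the case of overlapping windows, and verify that the canonical walk associated with an edge set is well defined --- which uses that this edge set spans a connected subgraph, on which one can fix, for instance, a depth-first traversal. (Alternatively, the proposition may simply be quoted from \cite[Theorem 6.2]{BrzozowskiSimon} once one passes between the transition semigroup of the minimal automaton of $L$ and $\SSgp(L)$, which are isomorphic.)
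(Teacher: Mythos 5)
The paper offers no proof of this proposition: it is quoted directly from \cite[Theorem 6.2]{BrzozowskiSimon}, after rephrasing from the transition semigroup of the minimal automaton to the (isomorphic) syntactic semigroup. Your parenthetical fallback — simply citing the source — therefore coincides exactly with what the paper does and is a legitimate way to discharge the statement.

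Judged as a self-contained argument, however, your proposal has a genuine gap. The reduction is correct and worth keeping: $\sim_k$ is a congruence, $L$ is \klt{$k$} \ifof $\sim_k\,\subseteq\,\sim_L$, conditions (1) and (2) hold in $\SSgp(L)$ \ifof $\approx\,\subseteq\,\sim_L$, and your verification that every generating pair of $\approx$ is $\sim_k$-related (the windowing argument across the middle occurrence of $x$, and across $xy^2=z(xy)=(xy)y$) is sound. This yields the implication ``$L$ \klt{$k$} $\Rightarrow$ (1) and (2)''. But the converse rests entirely on the inclusion $\sim_k\,\subseteq\,\approx$, and for that you give only a plan. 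The saturation step needs a proof that every relevant factor can be brought into a configuration $xy=zx$ where relation (2) applies; the sorting step needs a factor containing \emph{three} occurrences of the same $(k-1)$-window before relation (1) can be invoked, and you must show such factors always arise in a word not yet in normal form; and the well-definedness of the canonical walk (independence of the chosen traversal, using only moves (1) and (2)) is asserted rather than checked. These are precisely the points where the content of Brzozowski and Simon's theorem lives, so as written the ``if'' direction of the proposition is unproved. Either carry out the normal-form argument in full, or cite the source as the paper does — but the sketch should not be presented as a proof.
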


We apply this characterization in the case $k=1$ in Section \ref{sec:1test}
below in order to characterize \klt{1} groups. More generally this
characterization provides a test for \klty{$k$}
that can be used algorithmically.


\section{\klt{1} groups}\label{sec:1test}

Given a word $w$ defined over $X$, we define the 
{\it support} of $w$, $\supp(w)$, to be the subset of 
$X$ consisting of the symbols in $X$ that appear in the word $w$.
A set $L$ of words over an alphabet $X$ is 
\klt{1} if membership of a word in 
$L$ can be determined 
simply from examination of which symbols appear in the word, 
irrespective of where they appear; that is, for some finite set 
${\mathcal S}$ of subsets of $X$, $w$ is in $L$ precisely 
if  $\supp(w) \in {\mathcal S}$. 

From Proposition \ref{prop:ltfsa} 
we see that a language $L$ is \klt{1} if and only if
the following two conditions hold in the syntactic
semigroup $S=\SSgp(L)$, for all representatives in $y,z \in X^*$: 
(1)~$yz =_S zy$  and (2)~$y=_S y^2$.
Note that there is no $x$ in these conditions because the $x$ of the conditions
of Proposition \ref{prop:ltfsa} has length 0 in this case.
This gives us the following.

\begin{corollary}\label{cor:1ltsyn}
A language $L$ is \klt{1} if and only if its
syntactic semigroup $\SSgp(L)$ is idempotent and commutative.
\end{corollary}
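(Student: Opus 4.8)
The plan is to derive this directly from Proposition~\ref{prop:ltfsa} by specializing to $k=1$. In that case the word $x$ appearing there is required to have length $k-1=0$, so $x$ is forced to be the empty word $\varepsilon$ and can simply be deleted wherever it occurs. Condition~(1) of Proposition~\ref{prop:ltfsa}, namely $xyxzx =_S xzxyx$, therefore collapses to $yz =_S zy$ for all $y,z\in X^*$; reading this in $\SSgp(L)=X^+/\!\sim_L$ and noting that the instances with $y$ or $z$ empty are trivial on both sides, we see it says exactly that $\SSgp(L)$ is commutative. Similarly, condition~(2) — that whenever $xy=zx$ we have $xy=_S xy^2$ — has hypothesis $y=z$ once $x$ is deleted; taking $z:=y$, which is always permissible, the conclusion becomes $y =_S y^2$ for every $y\in X^+$, i.e.\ $\SSgp(L)$ is idempotent.

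For the converse I would simply reverse this: assuming $S=\SSgp(L)$ is idempotent and commutative, I check that conditions~(1) and~(2) of Proposition~\ref{prop:ltfsa} hold for $k=1$. With $x=\varepsilon$, condition~(1) reads $yz=_S zy$, which is commutativity, and condition~(2) reads ``$y=z$ implies $y=_S y^2$'', which follows from idempotency; Proposition~\ref{prop:ltfsa} then yields that $L$ is \klt{1}.

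The only point needing any care — and it is a very minor one — is the bookkeeping around the empty word: here $x$ is genuinely the empty string, whereas $\SSgp(L)$ consists of $\sim_L$-classes of \emph{non-empty} words, so one should observe that the instances of conditions~(1) and~(2) in which $y$ or $z$ is empty are satisfied trivially and contribute nothing, so that what remains is precisely commutativity and idempotency of $\SSgp(L)$. Beyond that, the corollary is an immediate unwinding of Proposition~\ref{prop:ltfsa}, and I do not anticipate any real obstacle.
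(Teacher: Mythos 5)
Your proposal is correct and follows exactly the paper's route: the paper also obtains the corollary by specializing Proposition~\ref{prop:ltfsa} to $k=1$, observing that $x$ then has length $0$ so that condition~(1) collapses to commutativity and condition~(2) to idempotency of $\SSgp(L)$. Your extra remark about the trivial instances involving the empty word is a harmless refinement of the same argument.
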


Using this result we can characterize \klt{1} groups.
\begin{theorem} \label{thm:1testfreeab}
Let $G$ be a finitely generated group.  Then the following are equivalent.
\begin{enumerate}
\item $G$ is free abelian.
\item $G$ is \klt{1}.
\item There is a finite symmetric generating set $X$ of $G$
such that the syntactic semigroup of $\Geo(G,X)$ is idempotent.
\end{enumerate}
\end{theorem}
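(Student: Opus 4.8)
The plan is to prove the cycle $(1)\Rightarrow(2)\Rightarrow(3)\Rightarrow(1)$, with the first implication being essentially the worked example from the introduction and the last implication being the substantive one. For $(1)\Rightarrow(2)$, suppose $G=\Z^n$ and take $X=\{a_1^{\pm1},\dots,a_n^{\pm1}\}$ to be the standard symmetric generating set. A word $w$ over $X$ is geodesic \ifof for each $i$ it does not contain both $a_i$ and $a_i^{-1}$; equivalently $\supp(w)$ avoids every pair $\{a_i,a_i^{-1}\}$. Thus $\Geo(G,X)$ is a union of $\sim_1$-classes, determined by its set of allowable supports, and $G$ is \klt{1}. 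The implication $(2)\Rightarrow(3)$ is immediate: if $\Geo(G,X)$ is \klt{1} then by Corollary~\ref{cor:1ltsyn} its syntactic semigroup is idempotent (and commutative), so in particular idempotent.

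The work is in $(3)\Rightarrow(1)$. Assume $X$ is a finite symmetric generating set such that $S=\SSgp(\Geo(G,X))$ is idempotent. First I would extract the key combinatorial consequence: if $s\in X^+$ then $s\sim_{\Geo} s^2$ in the syntactic congruence, i.e. for all words $p,q$ over $X$, $psq\in\Geo(G,X)\iff ps^2q\in\Geo(G,X)$. I want to turn this into a statement about the group. Apply it with $s$ a single generator $a\in X$, and $p,q$ empty: $a\in\Geo(G,X)$ (trivially, unless $a=_G1$) forces $a^2\in\Geo(G,X)$, hence $l(a^2)=2$ in $G$, and inductively $a^k\in\Geo(G,X)$ for all $k$, so $a$ has infinite order and the cyclic subgroup $\langle a\rangle$ is undistorted with geodesics $a^k$. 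More importantly, I would use the full syntactic equivalence to show commutativity and the right "free" structure. Take two generators $a,b\in X$ with $ab$ geodesic; idempotency gives $ab\sim_{\Geo}(ab)^2$, and since the syntactic semigroup of a locally testable language is also commutative by Corollary~\ref{cor:1ltsyn} (condition (1) there), we get $ab\sim_{\Geo}ba$. The plan is to bootstrap from such relations, using that $\Geo(G,X)$ is closed under these rewritings, to show that every geodesic word can be rearranged and freely reduced within $\Geo(G,X)$, and to deduce that $G$ is abelian with $X$ behaving like a basis up to the relations $a_i a_i^{-1}=1$ — in other words that $G\cong\Z^n$ for $n$ half the number of non-involutory generators (and that there are no involutions, since an involution $a$ would give $a\in\Geo$, $a^2\notin\Geo$, contradicting $a\sim_{\Geo}a^2$).

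The main obstacle I anticipate is the last step: moving from the purely language-theoretic facts "$\Geo(G,X)$ is a union of $\sim_1$-classes" (equivalently, geodesicity depends only on the support) to the algebraic conclusion that $G$ is \emph{free} abelian, rather than merely abelian with some torsion-free quotient structure. The support condition says there is a family $\mathcal S$ of subsets of $X$ with $w\in\Geo(G,X)\iff\supp(w)\in\mathcal S$; I would argue that $\mathcal S$ must be closed downward and must be exactly $\{\,T\subseteq X : T\text{ contains no pair }\{a,a^{-1}\}\text{ with }a=_G\text{ some fixed involution-free element, ...}\,\}$, forcing the "only relations are $xx^{-1}=1$" conclusion. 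Concretely: if $a,b\in X$ with $ab$ geodesic, then $aabb$, $abab$, $ba$ are all geodesic (same support), and lengths are preserved, which pins down $|ab|=2$ and, pushing further with longer words, shows $\langle a,b\rangle$ is free abelian of rank $2$ inside $G$; doing this uniformly over all of $X$ and checking consistency of the supports gives $G\cong\Z^n$. I would also need to rule out, via the support description, the possibility that some generator $a$ satisfies $a=_G a^{-1}$ or that distinct generators coincide in $G$ in a way incompatible with $\mathcal S$ being support-determined; that bookkeeping is routine once the free-abelian skeleton is in place. Finally, $(1)\Rightarrow(2)$ together with $(3)\Rightarrow(1)$ closes the cycle since $(3)$ trivially follows from $(2)$.
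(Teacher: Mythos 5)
Your proof of $(1)\Rightarrow(2)$ and $(2)\Rightarrow(3)$ matches the paper. The gap is in $(3)\Rightarrow(1)$, and it is twofold. First, your key step invokes commutativity of the syntactic semigroup ``by Corollary~\ref{cor:1ltsyn}'' to get $ab\sim_L ba$, but that corollary only applies once you know the language is \klt{1}; hypothesis (3) gives you \emph{only} idempotency of $\SSgp(\Geo(G,X))$. The paper even remarks, immediately after the theorem, that an idempotent syntactic semigroup need not be commutative (the language over $\{a,b\}$ of words not starting with $b$ is an example); commutativity is a \emph{conclusion} of the theorem, available only after $G$ is shown to be abelian. So this step is circular: you are in effect proving $(2)\Rightarrow(1)$, which would leave the cycle through (3) open. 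The same objection applies to your later appeal to a support-determined description of $\Geo(G,X)$, which again presupposes \klty{1} rather than mere idempotency.

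Second, even granting commutativity, the substantive deduction that $G$ is free abelian is only sketched as a ``bootstrap'' of rearrangement relations, and no mechanism is supplied to carry it out; moreover your proposed endpoint ($X$ behaves like a basis, $G\cong\Z^n$ with $n$ half the number of non-involutory generators) is false in general, since $X$ may contain repeated or redundant generators while $G$ is still free abelian of smaller rank. The paper's route is quite different and uses the group structure decisively: idempotency gives that powers of geodesics are geodesic, whence $G$ is torsion-free; then for $x\in X$ and $g\in G$, a geodesic word $w$ for $g^{-1}xg$ has $w^m$ geodesic for $g^{-1}x^mg$, forcing $l(w)=1$, so each generator has finitely many conjugates, its centralizer has finite index, and $Z(G)$ has finite index; Schur's theorem (\cite[Thm 10.1.4]{Robinson}) then makes $G'$ finite, hence trivial by torsion-freeness, so $G$ is a finitely generated torsion-free abelian group, i.e.\ free abelian. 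Your observation that generators have geodesic powers is the right starting point, but without the conjugacy/centralizer/Schur argument (or some substitute working from idempotency alone) the implication $(3)\Rightarrow(1)$ is not established.
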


\begin{proof}
Suppose that $G$ is finitely generated by $Y=\{y_1,\ldots y_n\}$ and
let $X=Y \cup Y^{-1}$.

If $G$ is free abelian on $Y$, then a word $w$ over $X$ is geodesic in $G$
\ifof its  support is a subset of one of the
$2^n$ distinct sets
\[\{y_1^{\epsilon_1},y_2^{\epsilon_2},\ldots,y_n^{\epsilon_n}\}\]
defined by sequences 
$(\epsilon_1,\epsilon_2,\ldots,\epsilon_n) \in \{1,-1\}^n$.
Hence $G$ is \klt{1}.

Corollary \ref{cor:1ltsyn} shows that if $G$ is \klt{1},
then (3) holds.

Finally suppose that the syntactic semigroup $S:=\SSgp(L)$ 
is idempotent, where $L := \Geo(G,X)$.
For any word $w$, we have $w =_S w^2$, and so (by
the definition of the congruence $\sim_L$) 
for all words $u,v \in X^*$ we have $uwv \in L \iff uw^2v \in L$.
In particular, $w^m \in L \iff w^{m+1} \in L$ for all positive 
integers $m$.
Hence if $w$ is geodesic, then $w^m$ must also be geodesic for all $m > 0$.
This implies in particular that $G$ must be torsion-free.

Now let
$x \in X$ be any generator, let $g \in G$ be any element, 
and let $w$ be a geodesic word
representing
$g^{-1}xg$. Then, since $w^m$ must be a geodesic representative
of $g^{-1}x^mg$ for all $m>1$, we see that $w$ must have length 1.
So $x$ has finitely many conjugates, and hence its centralizer has finite
index. Therefore the center $Z(G)$ has finite index in $G$.

Next we apply \cite[Thm 10.1.4]{Robinson} to deduce that 
the derived subgroup $G'$ must be finite. Since $G$ is also 
torsion-free this implies that $G'$ is trivial, and so $G$ is 
abelian. Thus $G$ is free abelian.
\end{proof}

We note that we have definitely made use of the group structure here.
In general, an idempotent syntactic semigroup is not
necessarily commutative.
As an example consider the \klt{2} language over $\{a,b\}$ consisting of
strings that do not start with $b$. Its syntactic semigroup $S$
is idempotent but $ab \neq_S ba$.
However, it is a consequence of the above theorem
that if the syntactic semigroup of the language of
geodesics of a group is idempotent, then the group is commutative,
and hence so is the syntactic semigroup.


\section{Closure under finite direct products}\label{sec:dirprod}

In this section we prove the following theorem.
\begin{theorem}\label{thm:ltprod}
For any $k > 0$, the class of \klt{$k$} groups
is closed under taking finite direct products.
\end{theorem}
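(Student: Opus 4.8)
The plan is to deduce this from Theorem \ref{thm:genprod}, so the real work is setting up and proving that more general statement; Theorem \ref{thm:ltprod} then follows by taking $\mathcal{L} = \LT(k)$, which we already noted is closed under union (being closed under all Boolean operations) and which is clearly closed under inverse images under length-preserving morphisms (if $\phi\colon X^* \to Y^*$ sends each letter to a letter, then $\pre_{k-1}$, $\suf_{k-1}$ and $\sub_k$ of $\phi(w)$ are determined by the corresponding data for $w$, so $\phi^{-1}$ of a union of $\sim_k$-classes is again such a union). So I will describe how I would prove Theorem \ref{thm:genprod}.

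First I would set up notation: let $G = G_1 \times \cdots \times G_m$ with each $G_i$ an $\mathcal{L}$-group, so there is a finite symmetric generating set $X_i$ of $G_i$ with $\Geo(G_i, X_i) \in \mathcal{L}$. The natural candidate generating set for $G$ is $X = X_1 \sqcup \cdots \sqcup X_m$ (disjoint union, with each $X_i$ acting on its own factor). For a word $w$ over $X$, let $w_i$ denote the image of $w$ under the length-preserving(?) — no: the projection that deletes all letters not in $X_i$ is \emph{not} length-preserving. This is the crux of the difficulty, and I address it next.

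The key observation is that $w$ is geodesic in $G$ \ifof each of its "coordinate projections" $\pi_i(w)$ (the subword of $w$ consisting of the letters lying in $X_i$) is geodesic in $G_i$; this is because the length of $w$ in $G$ is the sum of the lengths of the $\pi_i(w)$ in the $G_i$, and each $\pi_i$ can only decrease length. So $\Geo(G,X) = \bigcap_{i=1}^m \pi_i^{-1}(\Geo(G_i,X_i))$. The obstacle is that $\pi_i$ is a \emph{deletion} morphism, not a length-preserving one, so we cannot directly invoke the closure hypothesis on $\pi_i^{-1}$. The standard trick to get around this is padding: introduce, for each $i$, a new padding symbol and a length-preserving morphism $\psi_i \colon X^* \to (X_i \cup \{\diamond_i\})^*$ that replaces every letter not in $X_i$ by $\diamond_i$ and fixes the letters of $X_i$. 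One then has to check that $\Geo(G_i, X_i)$, viewed appropriately, pulls back under $\psi_i$ to the same set as under $\pi_i$ — i.e.\ that the "padded" version $\{u \in (X_i\cup\{\diamond_i\})^* : (u \text{ with } \diamond_i\text{'s deleted}) \in \Geo(G_i,X_i)\}$ lies in $\mathcal{L}$. The cleanest way to arrange this is to observe that deleting a fixed letter is realized by a length-preserving morphism \emph{composed with intersection with a regular language}, or — better for our hypotheses — to enlarge $X_i$ itself: work with generating set $X_i' = X_i \cup \{\diamond_i\}$ where $\diamond_i$ represents the identity of $G_i$ (equivalently, add a redundant generator equal to $1$); then $\Geo(G_i, X_i')$ — hmm, that is not quite right either, since a geodesic never uses a generator equal to $1$.

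The resolution I would actually pursue: prove a lemma that $\mathcal{L}$, being closed under union and under inverse length-preserving morphisms, is automatically closed under inverse images of the particular deletion maps $\pi_i$ restricted to the relevant languages — or, more robustly, prove directly that $\Geo(G,X)$ is obtained from the $\Geo(G_i,X_i)$ by: (a) inverse length-preserving morphism into the alphabet where each $X_j$-letter with $j\neq i$ is renamed to a single symbol $\diamond$, landing in $(X_i \cup\{\diamond\})^*$; then (b) noting $\Geo(G_i,X_i)$ over $X_i$ and its image in $(X_i\cup\{\diamond\})^*$ under "allow arbitrary $\diamond$-insertions" — and here I would reduce to showing this $\diamond$-saturation stays in $\mathcal{L}$, which for $\mathcal{L} = \LT(k)$ is a direct check on $\sim_k$-classes and is the content that makes the direct-product theorem non-formal. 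I expect the main obstacle to be exactly this interplay between the non-length-preserving projection $\pi_i$ and the length-preserving-morphism hypothesis, and the write-up will need either an explicit padding argument or a short lemma isolating the closure property of $\mathcal{L}$ that is really used; I would state Theorem \ref{thm:genprod} with hypotheses chosen precisely so that this padding goes through (e.g.\ also assuming closure under intersection with $X^*$, which $\LT(k)$ has), and then the deduction of Theorem \ref{thm:ltprod} is immediate. Finally, once $\Geo(G,X) \in \mathcal{L}$ is established, there is nothing more to do: $G$ is an $\mathcal{L}$-group by definition.
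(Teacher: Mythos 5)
Your overall architecture — deducing Theorem \ref{thm:ltprod} from Theorem \ref{thm:genprod}, plus the lemma that $\LT(k)$ is closed under union and under inverse images of length-preserving morphisms — matches the paper exactly. But your proof of Theorem \ref{thm:genprod} is never completed, and the route you sketch cannot be completed as stated. With the disjoint-union generating set $X = X_1 \sqcup \cdots \sqcup X_m$, the coordinate maps $\pi_i$ are deletion morphisms, and neither $\LT(k)$ nor $\LT$ is closed under inverse images of deletions: your hoped-for ``$\diamond$-saturation stays in $\mathcal{L}$, a direct check on $\sim_k$-classes'' is false (over $\{a,\diamond\}$, the saturation of $\{a^n : n \ge 2\}$ is ``contains at least two $a$'s'', yet $\diamond^n a \diamond^n a \diamond^n \sim_k \diamond^n a \diamond^n$ for $n \ge k$). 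Worse, the concrete language you would need to be \klt{$k$} genuinely is not: take $G_1 = F_2 = \langle a,b \rangle$ (\klt{2}) and $G_2 = \Z = \langle c \rangle$ (\klt{1}) with $X$ the disjoint union of the standard symmetric generating sets. Then $u = c^n a c^n b c^n a^{-1} c^n$ is geodesic in $G_1 \times G_2$ while $v = c^n a c^n a^{-1} c^n b c^n$ is not, yet $u \sim_k v$ as soon as $n \ge k$ (same length-$(k\!-\!1)$ prefix and suffix $c^{k-1}$, same set of length-$k$ subwords). So $\Geo(G_1 \times G_2, X)$ is not locally testable at all, and no padding lemma or strengthened hypothesis on $\mathcal{L}$ can rescue this choice of generators; the freedom to change the generating set, built into the definition of an $\mathcal{L}$-group, is essential and is exactly what you did not use.

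The paper's proof makes that change of generators, after which everything is formal. Adjoin to $X$ and $Y$ a generator representing the identity of each factor (geodesics never use them, so $L_1$ and $L_2$ are unchanged), and generate $G \times H$ by $Z = \{(x,y) : x \in X,\ y \in Y\}$. The two projections $Z^* \to X^*$ and $Z^* \to Y^*$ are now length-preserving morphisms, and because the identity generators permit padding, the $Z$-length of $(g,h)$ is $\max(|g|_X, |h|_Y)$; hence a word $w \in Z^*$ is geodesic \ifof $\pi_1(w)$ is geodesic in $G$ \emph{or} $\pi_2(w)$ is geodesic in $H$, that is, $\Geo(G\times H, Z) = \pi_1^{-1}(L_1) \cup \pi_2^{-1}(L_2)$, which lies in $\mathcal{L}$ by the two closure hypotheses. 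The missing idea in your write-up is precisely this product generating set, which replaces your intersection of inverse images under deletions by a union of inverse images under length-preserving morphisms.
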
 

In fact the theorem is a special case of the following more general result.
For a class of languages $\mathcal{L}$, we understand an $\mathcal{L}$-group to be a 
group $G$ for which $\Geo(G,X)$ is in $\mathcal{L}$, for some finite symmetric
generating set $X$.

\begin{theorem}\label{thm:genprod}
Let $\mathcal{L}$ be a class of languages that is closed under the
operations of union and taking inverse images under length preserving morphisms.
Then the class of $\mathcal{L}$-groups is closed under taking finite direct products.
\end{theorem}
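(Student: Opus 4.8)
The plan is to reduce the statement to a direct-product construction on generating sets and then verify closure by exhibiting $\Geo(G_1 \times G_2, Z)$ as an element of $\mathcal{L}$ built from the geodesic languages of the factors using only unions and inverse images of length-preserving morphisms. By induction it suffices to treat the case of two factors $G_1$ and $G_2$. Choose finite symmetric generating sets $X_i$ for $G_i$ with $L_i := \Geo(G_i, X_i) \in \mathcal{L}$. The natural generating set for $G_1 \times G_2$ is $X = (X_1 \times \{1\}) \cup (\{1\} \times X_2)$, but this will not immediately work because geodesics in the product interleave the two coordinates arbitrarily: a word over $X$ is geodesic in $G_1 \times G_2$ \ifof the subword consisting of its $X_1$-letters is geodesic in $G_1$ \emph{and} the subword of its $X_2$-letters is geodesic in $G_2$. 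This "shuffle" condition is exactly an intersection of two inverse images under the (length-preserving, if we introduce a padding symbol) projection-type morphisms, but projections are not length preserving on their own.

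The key device I would use to stay inside the class of operations allowed ($\mathcal{L}$ is closed under union and inverse images under length-preserving morphisms, but \emph{not} necessarily intersection) is the standard trick for taking direct products of automatic-type structures: pass to a padded alphabet. Concretely, set $Z = (X_1 \cup \{\$_1\}) \times (X_2 \cup \{\$_2\})$ where $\$_i$ is a padding symbol not in $X_i$, and within $G_1 \times G_2$ interpret a pair $(a,b) \in Z$ as the element $(\bar a, \bar b)$, reading $\$_i$ as the identity. Then any word over $X$ can be rewritten, letter by letter, as a word of the same length over $Z$ in which each position carries an $X_1$-letter paired with $\$_2$, or $\$_1$ paired with an $X_2$-letter. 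Define the length-preserving morphisms $\pi_i : Z^* \to (X_i \cup \{\$_i\})^*$ by coordinate projection; then, writing $d_i : (X_i \cup \{\$_i\})^* \to X_i^*$ for the homomorphism deleting $\$_i$, a word $w$ over $Z$ of the restricted "one-active-coordinate-per-letter" form represents a geodesic in the product precisely when $d_1(\pi_1(w)) \in L_1$ and $d_2(\pi_2(w)) \in L_2$. The problem is that $d_i$ is not length preserving and intersection is not assumed.

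To get around both obstacles simultaneously, I would instead take the generating set of the product to be the disjoint union $X = X_1 \sqcup X_2$ itself (no padding), but build the geodesic language as a union over all "shuffle shapes". The honest way: a word $w \in X^*$ lies in $\Geo(G_1\times G_2, X)$ \ifof its $X_1$-projection $p_1(w)$ lies in $L_1$ and its $X_2$-projection $p_2(w)$ lies in $L_2$, where $p_i$ erases the $X_j$-letters. Now observe $\Geo(G_1\times G_2, X) = \bigcup_{\sigma} (\text{words of shape } \sigma \text{ with } X_i\text{-part a geodesic})$, and for a \emph{fixed} length the shape is determined and one can write the relevant set as the inverse image of $L_1 \times L_2$-type data under a length-preserving map into $X_1^* \times X_2^*$... which again hits the same wall. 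So the clean solution is: use the padded alphabet $Z$ as above, let $L_i' \subseteq (X_i\cup\{\$_i\})^*$ be $d_i^{-1}(L_i)$ together with $(\$_i)^*$-padding, observe (this is the crux) that because we only ever pad with a \emph{single} $\$_i$ per position and the interleaving is genuine, $\pi_i^{-1}(L_i')$ is what we want, and handle the intersection $\pi_1^{-1}(L_1') \cap \pi_2^{-1}(L_2')$ by noting it equals the inverse image of a single language under a single length-preserving morphism into a product alphabet, or alternatively by first intersecting with the (regular, hence describable) set of legal padded words and absorbing the other projection's condition into the alphabet — i.e. take the morphism $Z^* \to (X_1\cup\{\$_1\})^*$ but first restrict $Z$ to the sub-alphabet of pairs with exactly one non-padding entry.

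The main obstacle, and the step I would spend the most care on, is precisely that $\mathcal{L}$ is \emph{not} assumed closed under intersection, so I cannot simply write $\Geo$ of the product as an intersection of two inverse images. The resolution I expect to carry out is to choose the morphism so that a \emph{single} inverse image captures the full condition: work with a product target alphabet $Y_1 \times Y_2$ where $Y_i = X_i \cup \{\$_i\}$, let $K \subseteq (Y_1\times Y_2)^*$ be the set of words whose $i$-th coordinate projection, after deleting $\$_i$'s, lies in $L_i$ for \emph{both} $i$ — but to build $K$ itself from $L_1, L_2$ using only unions and length-preserving inverse images one observes $K$ is obtained from $L_1 \cup (\text{suitably relabelled}) L_2$ over the disjoint alphabet by a length-preserving substitution, using that the two coordinate conditions are on disjoint letter-sets once padded, so that the "intersection" is really a product/union phenomenon rather than a genuine intersection. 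I would verify carefully (this is the routine-but-delicate part) that the resulting $\Geo(G_1\times G_2, Z) = \phi^{-1}(L)$ for an explicit length-preserving $\phi$ and an explicit $L \in \mathcal{L}$, then deduce Theorem~\ref{thm:ltprod} from Theorem~\ref{thm:genprod} by recalling that $\LT(k)$ is closed under union (immediate from the union-of-$\sim_k$-classes definition) and under inverse images of length-preserving morphisms (since such a morphism pulls $\sim_k$ back to $\sim_k$, as it preserves prefixes, suffixes and the multiset of length-$k$ windows up to the induced letter map).
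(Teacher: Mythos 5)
Your proposal correctly identifies the central obstacle (the class $\mathcal{L}$ is not assumed closed under intersection), but it never actually overcomes it. With the generating set you ultimately commit to --- the ``one active coordinate per letter'' sub-alphabet, i.e.\ essentially the disjoint union $X_1\sqcup X_2$ --- a word is geodesic in $G_1\times G_2$ precisely when \emph{both} (padding-deleted) projections are geodesic, and this really is a conjunction; moreover the deletion maps $d_i$ are not length preserving. Your proposed fixes do not repair this: ``length-preserving substitution'' is not among the permitted operations, and the claim that because the two conditions live on disjoint letter-sets the intersection ``is really a product/union phenomenon'' is simply asserted, not proved --- and it is false in general (the set of interleaved words whose $X_1$-part lies in $L_1$ \emph{and} whose $X_2$-part lies in $L_2$ is not obtainable from $L_1,L_2$ by unions and inverse images of length-preserving morphisms for an arbitrary class $\mathcal{L}$). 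So the crux of your argument is a gap, not a delicate-but-routine verification.

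The missing idea is a different choice of generating set that converts the conjunction into a disjunction. The paper adjoins the identity to each of $X$ and $Y$ (which does not change the geodesic languages, since any word using the new letter is non-geodesic) and takes $Z=\{(x,y): x\in X,\ y\in Y\}$, the \emph{full} product alphabet --- exactly the alphabet you introduce with your padding symbols $\$_i$, before you restrict it away. With this $Z$ every letter moves both coordinates, so the word length of $(g,h)$ is $\max(|g|_X,|h|_Y)$, and a word $w\in Z^*$ is geodesic \ifof $\pi_1(w)\in L_1$ \emph{or} $\pi_2(w)\in L_2$, where $\pi_1,\pi_2$ are the coordinate projections, which are honestly length preserving (no deletion needed). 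Hence $\Geo(G\times H,Z)=\pi_1^{-1}(L_1)\cup\pi_2^{-1}(L_2)\in\mathcal{L}$, using only the two assumed closure properties. Your final remarks deducing Theorem~\ref{thm:ltprod} from Theorem~\ref{thm:genprod} (closure of $\LT(k)$ under union and under inverse images of length-preserving morphisms, since such morphisms respect $\sim_k$) are correct and agree with the paper, but the proof of Theorem~\ref{thm:genprod} itself is incomplete as written.
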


\begin{proof}

Suppose that the geodesic languages $L_1$ and $L_2$ of the groups $G$ and $H$
over the symmetric generating sets $X$ and $Y$ are in $\mathcal{L}$.
We adjoin generators to each of $X$ and $Y$ that are equal to the
identity elements of the respective groups; since any words
containing those generators are non-geodesic, this action does not change
$L_1$ or $L_2$, and hence it does not affect their membership in $\mathcal{L}$.

Define $Z:= \{(x,y): x \in X, y \in Y\} \subseteq G \times H$.
Then $Z$ generates $G \times H$ (since $X$ and $Y$ each contain an
identity). We define
$L$  to be the geodesic language for $G\times H$ over $Z$.

For any word $w$ over $Z$, define $\pi_1(w)$ to be the projection of $w$
onto $X^*$ and $\pi_2(w)$ to be its projection onto $Y^*$.
Then, for $w \in Z^*$, we see that if $w$ is to be non-geodesic, both
of its projections must be non-geodesic, and conversely. In other words,
the
following holds.
\[ w  \in L \iff \pi_1(w)  \in L_1 \vee \pi_2(w) \in L_2. \]

Since $\pi_1$ and $\pi_2$ are length preserving morphisms, 
the result now follows immediately.
\end{proof}

In order to deduce Theorem \ref{thm:ltprod} from Theorem \ref{thm:genprod}
we simply need the following lemma.
\begin{lemma}
The class of $k$-locally testable languages is closed under the
operations of union and taking inverse images under length preserving
morphisms.
\end{lemma}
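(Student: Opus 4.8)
Closure under union is immediate from the definition: a $\klt{k}$ language is a union of $\sim_k$-classes, and a union of unions of $\sim_k$-classes is again a union of $\sim_k$-classes. (This is already noted for $\LT(k)$ in Section~\ref{sec:char}.) So the content of the lemma is the statement about inverse images. The plan is as follows.

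Let $\phi \colon Y^* \to X^*$ be a length-preserving morphism, and suppose $L \subseteq X^*$ is $\klt{k}$. The first step is to observe that, since $\phi$ is a morphism with $l(\phi(w)) = l(w)$ for all $w$, in particular $l(\phi(y)) = 1$ for each letter $y \in Y$; thus $\phi$ is induced by a map $Y \to X$ of alphabets, which I also denote $\phi$, and $\phi(a_1 a_2 \cdots a_n) = \phi(a_1)\phi(a_2)\cdots\phi(a_n)$ is obtained by rewriting letters in place. The second step is to record that, because $\phi$ acts letter-by-letter, it commutes with the three operations appearing in the definition of $\sim_k$: for every $u \in Y^*$ we have $\pre_{k-1}(\phi(u)) = \phi(\pre_{k-1}(u))$, $\suf_{k-1}(\phi(u)) = \phi(\suf_{k-1}(u))$, and $\sub_k(\phi(u)) = \{\phi(s) : s \in \sub_k(u)\}$. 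These identities hold uniformly, including in the degenerate cases where $u$ is shorter than $k-1$ or $k$, since $\phi$ preserves length and hence preserves whether a word is long enough for a prefix/suffix/subword of the relevant size to exist.

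From these identities the third step follows: if $u \sim_k v$ in $Y^*$, then $\phi(u) \sim_k \phi(v)$ in $X^*$. The final step is to conclude. Since $L$ is a union of $\sim_k$-classes, the condition $x \in L$ depends only on the $\sim_k$-class of $x$; so if $u \sim_k v$ in $Y^*$ then $\phi(u) \in L \iff \phi(v) \in L$, i.e. $u \in \phi^{-1}(L) \iff v \in \phi^{-1}(L)$. Hence $\phi^{-1}(L)$ is a union of $\sim_k$-classes of $Y^*$, that is, $\phi^{-1}(L)$ is $\klt{k}$.

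There is no real obstacle here; the only point requiring a moment's care is the first step, the remark that ``length-preserving morphism'' forces the map to send single letters to single letters, together with a quick check that the prefix/suffix/subword identities survive the boundary cases of short words. Once that is in place the argument is a direct transport of $\sim_k$-classes along $\phi$. Combined with Theorem~\ref{thm:genprod}, this yields Theorem~\ref{thm:ltprod}.
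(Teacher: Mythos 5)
Your proposal is correct and follows essentially the same route as the paper: closure under union is immediate from the definition, and the key fact is that a length-preserving morphism $\phi$ satisfies $u \sim_k v \Rightarrow \phi(u) \sim_k \phi(v)$, so that $\phi^{-1}(L)$ is a union of $\sim_k$-classes. Your additional care with the letter-by-letter description of $\phi$ and the short-word boundary cases just fills in details the paper leaves implicit.
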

\begin{proof}
Closure under union is immediate from the definition. 
The second claim follows from the fact that, if $\phi:A^* \rightarrow B^*$
is a length preserving morphism and $v,w \in A^*$ with $v  \sim_k w$,
then $\phi(v) \sim_k \phi(w)$
(where $\sim_k$ is as in the definition of local testability
in Section~\ref{sec:char}).
\end{proof}

Note that Theorem \ref{thm:genprod} can also be applied to the class of
star-free languages.
That result is also a  special case 
of \cite[Theorem 5.3]{HermillerHoltRees},
which is proved using a different construction, and different generating set.

At this point it is natural to ask whether the
class of groups with a locally testable language of
all geodesics is closed under the free product operation.
Although we do not have a definitive answer to that
question, we suspect that the answer is no, and
the following example shows that the natural
generating set does not work.

Consider the group 
$G={\Z}^2 * {\Z}=
(\langle a \rangle \times \langle b \rangle) *\langle c \rangle$.
Giving the ${\Z}^2$ subgroup the generating set from the proof of 
Theorem \ref{thm:genprod}, consider  
the generating set 
\begin{eqnarray*}
X & = & \{ (1,1),(1,b),(1,b^{-1}),(a,1),(a,b),(a,b^{-1}), \\
 & & (a^{-1},1),(a^{-1},b),(a^{-1},b^{-1}), c,c^{-1} \}
\end{eqnarray*}
for $G$.
For every natural number $n$, the language $\Geo(G,X)$ of geodesics
contains the word 
$u=(1,b)^n(a,b)^nc^n(a,b)^n(a,b^{-1})^{n}(a,b)^{n}$
but does not contain the word
$v=(1,b)^n(a,b)^{n}(a,b^{-1})^{n}(a,b)^nc^n(a,b)^{n}$.  However,
$u \sim_n v$.  So $\Geo(G,X)$ is not a union of $\sim_n$ equivalence
classes for any $n$, and hence is not locally testable.



\section{Locally testable geodesics and torsion 
conjugacy classes}\label{sec:lttorconj}

In this section we prove the following.

\begin{theorem} \label{thm:lttorconj}
A locally testable group
has finitely many conjugacy classes of torsion elements.
\end{theorem}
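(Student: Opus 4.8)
The plan is to exploit the idempotent-type structure that local testability forces on the syntactic semigroup of $\Geo(G,X)$, much as in the proof of Theorem~\ref{thm:1testfreeab}, but now working with the length-$(k-1)$ prefix/suffix data rather than just the support. Fix a symmetric generating set $X$ with $L:=\Geo(G,X)$ \klt{$k$}, and let $S=\SSgp(L)$. The key input is Proposition~\ref{prop:ltfsa}: for any word $x$ with $l(x)=k-1$ and any word $y$ with $xy=zx$ in $S$ for some $z$, we have $xy=_S xy^2$. In particular, if $w$ is a geodesic word whose length is at least $k-1$ and whose prefix of length $k-1$ equals its suffix of length $k-1$ — say $w = x w' $ with $\suf_{k-1}(w)=x$ — then $w$ and $w^2$ are $\sim_L$-related in the relevant sense, so that $uwv\in L \iff uw^2v\in L$ for all $u,v$. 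Iterating, $w^m \in L \iff w^{m+1}\in L$ for all $m\ge 1$; hence if such a $w$ is geodesic, then $w^m$ is geodesic for all $m>0$. The upshot is a "pumping" statement: a geodesic word that reads the same on its first and last $k-1$ letters can be raised to arbitrary powers and stay geodesic.

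Next I would use this to bound torsion. Let $t\in G$ be a torsion element and let $g\in G$ be arbitrary; I want to show $g^{-1}tg$ lies in one of finitely many conjugacy classes, equivalently that $t$ has bounded "girth" in a suitable sense. Suppose $g^{-1}tg$ has infinite order would be the wrong tack since conjugates of torsion are torsion; instead, pick a geodesic representative $w$ of $g^{-1}tg$. If $l(w) \ge 2(k-1)$, I can try to massage $w$ (using the equivalence classes under $\sim_k$, which $L$ respects) into a word of the same length whose prefix and suffix of length $k-1$ agree — for instance by observing that among the conjugates $h^{-1}(g^{-1}tg)h$ as $h$ ranges over short elements, or among cyclic permutations realized inside the group, one gets a geodesic loop word whose endpoints match. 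Once I have a geodesic $w$ representing a torsion element with $\suf_{k-1}(w)=\pre_{k-1}(w)$, the pumping statement forces $w^m$ geodesic for all $m$, so $w^m \ne_G 1$ for all $m>0$, contradicting that the underlying element has finite order — unless $w$ is so short that the hypothesis $l(w)\ge 2(k-1)$ fails. This bounds the geodesic length, hence the number, of torsion elements arising this way; combined with the fact that conjugation preserves torsion, one concludes there are only finitely many conjugacy classes of torsion elements (each class having a representative of length $< 2(k-1)$, say, in the relevant reduced sense).

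The main obstacle I anticipate is the middle step: arranging, from an arbitrary geodesic representative of a torsion conjugacy class, a geodesic word that genuinely has matching length-$(k-1)$ prefix and suffix so that Proposition~\ref{prop:ltfsa}(2) applies. The naive idea — cyclically permute $w$ — need not give a geodesic, and need not preserve the group element. The right move is probably to work conjugacy-class-wise: choose $t$ in its class so that some geodesic representative $w$ of $t$ is, among all geodesics of all conjugates, of minimal length; then argue that minimality forces a prefix/suffix coincidence (otherwise a further conjugation by a single generator would shorten it, or would produce an equally short geodesic to which the argument can be reapplied), and handle the finitely many short residual cases directly. Making this minimization argument airtight — in particular ruling out that the prefix and suffix are "incompatible" in a way that blocks both shortening and pumping — is the delicate point; everything else is a routine consequence of the $\sim_k$-invariance of $L$ and the semigroup identities in Proposition~\ref{prop:ltfsa}.
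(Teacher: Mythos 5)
Your overall strategy (show that a long enough word representing a torsion element can be replaced, inside its conjugacy class, by a strictly shorter one, by exploiting a $\sim_k$-pumping identity) is the right one, but the key lemma you rely on is false as stated, and that is exactly the point where the real work lies. You claim that if a word $w$ of length at least $k-1$ has $\pre_{k-1}(w)=\suf_{k-1}(w)=x$, then $uwv\in L \iff uw^2v\in L$ for all $u,v$, i.e.\ $w=_S w^2$. This does not follow from Proposition~\ref{prop:ltfsa}(2), and it is not true: writing $w=xy=zx$, condition (2) gives only $xy=_S xy^2$, and $xy^m$ is a very different word from $w^m$ (pumping $y$ is not pumping $w$). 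Concretely, with $k=2$ and $w=aba$ one has $\pre_1(w)=\suf_1(w)=a$, yet $w^2=abaaba$ contains the new length-$2$ subword $aa$, so $w\not\sim_2 w^2$; in any $2$-testable language excluding $aa$ the equivalence $uwv\in L\iff uw^2v\in L$ fails. Moreover, even the valid identity $xy=_S xy^2$ is useless for your purpose: $xy^m$ represents the group element $x(x^{-1}g)^m$ (where $g=_G w$), which is not a power of a conjugate of $g$, so the torsion hypothesis gives no contradiction. What is genuinely needed is that all length-$k$ windows crossing the junction in $w\cdot w$, i.e.\ the length-$k$ subwords of $\suf_{k-1}(w)\pre_{k-1}(w)$, already occur \emph{inside} $w$ -- a condition strictly stronger than prefix equals suffix.

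The paper's Lemma~\ref{lem:cycl} is precisely the device that repairs this: for $l(w)\ge N:=2k(|X|^{2k}+1)$, a pigeonhole argument produces a repeated subword $u=st$ of length $2k$, and the cyclic permutation $\widetilde w=tystzxs$ of $w=xstystz$ has the property that every junction subword of $\widetilde w^j$ is a subword of $st$, which occurs inside $\widetilde w$; hence $\widetilde w\sim_k\widetilde w^j$ for all $j\ge 1$. Since a cyclic permutation represents a conjugate, $\widetilde w$ is again torsion, some power of it is non-geodesic, so $\widetilde w$ itself is non-geodesic and the conjugacy class has a representative word of strictly smaller length; iterating gives a representative of length $<N$ for every torsion class. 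Note that this argument applies to an \emph{arbitrary} long word representing a torsion element -- no geodesity and no prefix/suffix coincidence is required -- so it also dissolves your second difficulty (the minimization-within-the-class step you flag as delicate), which in your outline is left unproved and, with the bound $2(k-1)$ you suggest, could not work anyway: the correct bound must grow with $|X|^{2k}$ because one needs a repeated subword, not merely a matching prefix and suffix.
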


First we have a useful lemma.

\begin{lemma}\label{lem:cycl} 
Let $X$ be a set and let $k>0$ be a natural number.
There exists a natural number $N$ such that whenever $w$ 
is a word over $X$ of 
length at least $N$, then $w^2 \sim_k w^j$ for all $j \ge 2$.
Moreover, there exists a cyclic permutation $\widetilde w$
of $w$ satisfying $\widetilde w \sim_{k} \widetilde w^j$ for all $j\ge 1$. 
\end{lemma}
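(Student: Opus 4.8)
The plan is to bound how much "history" the relation $\sim_k$ can see, and to show that for sufficiently long $w$ every power $w^j$ with $j\ge 2$ is indistinguishable from $w^2$. Recall that $u\sim_k v$ iff $\pre_{k-1}(u)=\pre_{k-1}(v)$, $\suf_{k-1}(u)=\suf_{k-1}(v)$, and $\sub_k(u)=\sub_k(v)$. First I would observe that for $l(w)\ge k-1$ the prefix of length $k-1$ of $w^j$ equals $\pre_{k-1}(w)$ for every $j\ge 1$, and similarly the suffix of length $k-1$ of $w^j$ equals $\suf_{k-1}(w)$ for every $j\ge 1$; so for $w$ of length at least $k-1$ the prefix and suffix conditions are automatic uniformly in $j$. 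Next I would analyse $\sub_k(w^j)$: any length-$k$ subword of $w^j$ either lies entirely within a single copy of $w$, or straddles a boundary between consecutive copies, i.e.\ is a subword of $ww$. Hence for $l(w)\ge k$ (so that boundaries are "visible" but no subword spans three copies) we get $\sub_k(w^j)=\sub_k(ww)$ for all $j\ge 2$. Taking $N=k$ (or $N=k-1$; either works with the obvious case-checking) gives $w^2\sim_k w^j$ for all $j\ge 2$, which is the first assertion.

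For the second assertion I need a cyclic permutation $\widetilde w$ of $w$ with $\widetilde w\sim_k\widetilde w^{\,j}$ for all $j\ge 1$; the difference from the first part is that now I must also control the $j=1$ case, so I need $\pre_{k-1}$, $\suf_{k-1}$ and $\sub_k$ of $\widetilde w$ itself to already agree with those of $\widetilde w^2$. The idea is to choose the cyclic rotation so that $\widetilde w$ "begins and ends the way a power does." Concretely, since $l(w)\ge N$, pick $\widetilde w$ to be the rotation of $w$ whose first $k-1$ letters coincide with $\pre_{k-1}$ of $w^\infty$ read starting from some fixed position and whose last $k-1$ letters coincide with the $k-1$ letters immediately preceding that position in $w^\infty$; equivalently, rotate so that the length-$(2k-2)$ window straddling the "seam" of $\widetilde w\,\widetilde w$ is an honest subword of $w^2$ that already occurs inside $\widetilde w$. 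Because $l(w)\ge N\ge 2k-2$ (enlarging $N$ if necessary), such a window sits inside a single copy once we rotate appropriately, so $\pre_{k-1}(\widetilde w)=\pre_{k-1}(\widetilde w^2)$ and $\suf_{k-1}(\widetilde w)=\suf_{k-1}(\widetilde w^2)$, and moreover every length-$k$ subword of $\widetilde w^2$ — including the seam-crossing ones — already appears inside $\widetilde w$, giving $\sub_k(\widetilde w)=\sub_k(\widetilde w^2)$. Then $\widetilde w\sim_k\widetilde w^2$, and combining with the first part ($\widetilde w^2\sim_k\widetilde w^{\,j}$ for $j\ge 2$) yields $\widetilde w\sim_k\widetilde w^{\,j}$ for all $j\ge 1$.

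The main obstacle is pinning down the correct rotation and the precise value of $N$ so that the seam-crossing length-$k$ subwords of $\widetilde w^2$ are genuinely forced to occur already inside $\widetilde w$, and symmetrically for the prefix/suffix of length $k-1$; this is where one has to be careful that $w$ is long enough (taking $N$ comfortably larger than $2k$, e.g.\ $N=2k$, removes all the edge cases) and that "the way $w$ reads across a seam" is captured by a single contiguous window that a suitable rotation brings into the interior. Once the rotation is fixed the verification of the three conditions is a routine comparison of finite windows, so I would not belabour it. I would record $N=2k$ (say) as an explicit admissible choice to make the statement self-contained.
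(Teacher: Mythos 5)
Your first assertion and its proof are fine, and your plan for the second assertion (rotate $w$ so that the seam-crossing windows of $\widetilde w^{\,2}$ already occur inside $\widetilde w$) has the same shape as the paper's argument. However, there is a genuine gap precisely at the point you yourself flag as ``the main obstacle'': you never prove that a suitable rotation exists, and your proposed value $N=2k$ is in fact false. Take $k=2$ and $w=aabb$ (in general $w=a^kb^k$, of length exactly $2k$): the $2k$ cyclic factors of length $k$ are pairwise distinct, so for every cyclic permutation $\widetilde w$ the $k-1$ length-$k$ windows of $\widetilde w^{\,2}$ that cross the seam are distinct from all windows occurring inside $\widetilde w$; hence $\sub_k(\widetilde w)\neq \sub_k(\widetilde w^{\,2})$ and $\widetilde w\not\sim_k \widetilde w^{\,2}$ for every rotation. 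Rotating only moves the seam around the cyclic word; it cannot force the seam-crossing windows into the interior unless some factor of length about $2k$ already occurs twice in the cyclic word, and guaranteeing such a repetition is exactly why $N$ must be large compared with $|X|^{k}$ --- no alphabet-independent choice such as $N=2k$ (or any $N\le |X|^k$) can work for the ``moreover'' part.

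The paper supplies the missing combinatorial input by pigeonhole: with $N=2k(|X|^{2k}+1)$, the word $w$ contains two disjoint occurrences of some factor $u$ of length $2k$; writing $u=st$ with $l(s)=l(t)=k$ and $w=xstystz$, it takes $\widetilde w:=tystzxs$. The seam of $\widetilde w\widetilde w$ then reads $\cdots xs\mid ty\cdots$, so every seam-crossing window of length $k$ is a factor of $st$, and $st$ occurs inside $\widetilde w$ itself; together with the (easy) prefix and suffix checks this yields $\widetilde w\sim_k\widetilde w^{\,j}$ for all $j\ge 1$. To repair your proposal you would need to add this repetition argument (or an equivalent one), and in particular replace $N=2k$ by a bound depending on $|X|$.
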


\begin{proof} 
Let $N:=2k(|X|^{2k}+1)$, and suppose that $w$ is a 
word with length $l(w) \ge N$.  
Since $l(w)>k$, for $j \ge 2$ we have 
$\pre_{k-1}(w) = \pre_{k-1}(w^2)=\pre_{k-1}(w^j)$,
$\suf_{k-1}(w) = \suf_{k-1}(w^2)=\suf_{k-1}(w^j)$
and $\sub_k(w^2)=\sub_k(w^j)$, as required.

By our choice of $N$, we can write $w=w_1 w_2 \cdots w_{|X|^{2k}+1}w'$,
where each subword $w_i$ has length $2k$.  There are only
$|X|^{2k}$ distinct words over $X$ of length $2k$, so for
some $i \neq j$, we have $w_i=w_j$.
Hence we can write $w=xuyuz$ for some word $u$ of length $2k$ and
$x,y,z \in X^*$.  Also write $u=st$ for words
$s,t \in X^*$ of length $k$ each.  Then $w=xstystz$.

Next let $\widetilde w := tystzxs$, a cyclic 
permutation of the word $w$.  By analyzing prefixes,
suffixes, and subwords of $\widetilde w$ and $\widetilde w^j$
as above, we find that
$\widetilde w \sim_{k} \widetilde w^j$ for all $j \ge 1$.
\end{proof}

Now we prove Theorem \ref{thm:lttorconj}.

\begin{proof}
Suppose that $X$ is a finite symmetric generating set for $G$
and that $L:=\Geo(G,X)$ is \klt{$k$}. 
Let $N:=2k(|X|^{2k}+1)$ as in the proof of Lemma \ref{lem:cycl}.
Suppose that $w$ is a word of length $l(w) \ge N$, and that 
$w$ represents a torsion element of $G$.

Lemma \ref{lem:cycl} says that there is a cyclic permutation
(and hence a conjugate) $\widetilde w$ of the word $w$ such that 
$\widetilde w \sim_{k} \widetilde w^i$ for all $i>0$. 
Then $\widetilde w \in L \iff \widetilde w^i \in L$, so
$\widetilde w$ is geodesic \ifof $\widetilde w^i$ is.
Since $w$ represents a torsion element, $\widetilde w$ 
must also be torsion in $G$, and so for some
$i$, and hence for all $i$, the word $\widetilde w^i$ is
not geodesic.  Therefore $\widetilde w =_G v$ 
for some word $v$ satisfying $l(v)<l(w)$ and
representing an element of $G$ in the conjugacy class
of $w$.  

If $l(v) \ge N$, then repeat this argument to obtain
words representing conjugates of $w$
of successively strictly shorter length.  Eventually
this process must end with a word $u$ of length 
$l(u) < N$ such that $u$ and $w$ are in the same
conjugacy class.  Therefore there are only finitely
many conjugacy classes of torsion elements.
\end{proof}


\section{Constructing a word acceptor for regular geodesics}\label{sec:geowa}

In this section, we describe a method for constructing a finite state
automaton that accepts the set of
all geodesic words in a finitely presented shortlex automatic group,
in the case that this set is a regular language
and hence such an automaton exists.  For Coxeter groups, a
method for doing this is described in~\cite{How93}, which 
is considerably faster than the general approach presented here. 

We suppose that our group $G$ is generated by the symmetric set $X$, and
as usual let $\Geo(G,X) \subset  X^*$ be the set of all geodesic words.
We also suppose that we have 
successfully computed the shortlex automatic structure
of $G$ with respect to some ordering of $X$. 
We shall assume throughout this section that $X^*$ is ordered by 
the associated shortlex ordering.

The procedure to be described here, which attempts to construct a finite
state automaton $\GW$ accepting a language $L(\GW)$ equal to $\geo$,  
will succeed eventually if $\geo$ is a regular language. 
If $G$ is word-hyperbolic, then the method described in Section 3
of~\cite{EH01} will construct $\GW$.  Otherwise, we (repeatedly, if
necessary) construct candidates for $\GW$, and try to prove their
correctness.
We shall first discuss our method for proving correctness of
$\GW$, which will succeed if and only and $\GW$ really is correct
(i.e. $L(\GW)=\geo$),
and then discuss how to come up with suitable candidates.

The automatic structure computed includes
the shortlex word acceptor $W$, the multiplier automata $M_x$ for
$x \in X \cup \{ \varepsilon \}$ (which we shall not need), and also
a {\em word-difference} automaton $D$ with the following properties:
\begin{mylist}
\item[(D1)] $(u,v)^+ \in L(D) \Rightarrow u =_G v$;
\item[(D2)] If $u,v \in X^*$ with $u,v \in L(W)$,
$x \in X \cup \{ \varepsilon \}$ and $ux =_G v$, then $(ux,v)^+ \in L(D)$.
\end{mylist}
Here $(u,v)^+$ denotes the padded pair corresponding to $u,v \in X^*$.
In fact, the construction of $D$ is such that its start state $\sigma$
is its only accepting state, and $D$ contains
transitions labeled $(a,a)$ from $\sigma$ to $\sigma$ for all
$a \in X$.  It therefore has the additional property:
\begin{mylist}
\item[(D3)] If $(u,v)^+ \in L(D)$ and $w,w' \in X^*$
then $(wu,wv)^+ \in L(D)$ and, if $l(u) = l(v)$, then $(wuw',wvw') \in L(D)$.
\end{mylist}

For a candidate $\GW$ for a finite state automaton with language
$\geo$, we use standard operations on finite state automata, as
described in Chapter 1 of~\cite{ECHLPT} or Section 13.1 of~\cite{H05},
to check whether $\GW$ satisfies the hypotheses of the following theorem.
If so, then the theorem tells us that $\GW$ is correct (that is,
$L(\GW) = \geo$). It is easy to see that
these conditions are also necessary for the correctness of $\GW$, so
our procedure will succeed if and only if $\GW$ is correct.

\begin{theorem} Let $\GW$ be a finite state automaton over $X$ which
satisfies the following conditions:
\begin{mylist}
\item[(i)] $L(\GW)$ is prefix-closed;
\item[(ii)] $L(W) \subseteq L(\GW)$;
\item[(iii)] If $(u,v)^+ \in L(D)$ with $l(u) = l(v)$, then $u \in L(\GW)
\iff v \in  L(\GW)$;
\item[(iv)] If $(u,v)^+ \in L(D)$ with $l(u) > l(v)$, then $u \not\in L(\GW)$.
\end{mylist}
Then $L(\GW) = \geo$.
\end{theorem}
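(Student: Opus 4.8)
The plan is to show that the four conditions (i)--(iv) are exactly what is needed to force $L(\GW)$ to coincide with $\geo$, using only the three properties (D1)--(D3) of the word-difference automaton $D$ together with the fact that $W$ is a correct shortlex word acceptor. First I would recall what we may use freely: $L(W)$ consists of exactly one (the shortlex-least) representative of each group element, and every element of $\geo$ has length equal to that of its shortlex representative.

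\medskip

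\noindent\textbf{Proof.}
We show the two inclusions $\geo \subseteq L(\GW)$ and $L(\GW) \subseteq \geo$.

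For $\geo \subseteq L(\GW)$, let $u \in \geo$ and let $v$ be the shortlex representative of the group element $u$ represents, so $v \in L(W)$ and, since $u$ is geodesic and $v$ is shortlex (hence geodesic), $l(u) = l(v)$. By (ii), $v \in L(\GW)$. Now $u,v \in L(W)$ would be needed to apply (D2) directly; instead note $v \in L(W)$ and take $x = \varepsilon$ in (D2) with $u$ replaced by $v$, giving $(v,v)^+ \in L(D)$, which is not yet helpful. The correct route is: since $u =_G v$ and $l(u)=l(v)$, we want $(u,v)^+ \in L(D)$ so that (iii) applies. This follows because $v\in L(W)$ and $v =_G v$, so by (D2) (with the pair $(v,v)$, $x=\varepsilon$) we only get the diagonal; to obtain $(u,v)^+\in L(D)$ we instead argue by induction on $l(u)$ along the word $u$, peeling off one generator at a time and using that $D$ reads prefixes of shortlex words paired with shortlex words of equal or shorter length via (D2), combined with (D3) to prepend common prefixes. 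Having established $(u,v)^+ \in L(D)$ with $l(u)=l(v)$, condition (iii) and $v \in L(\GW)$ give $u \in L(\GW)$.

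For $L(\GW) \subseteq \geo$, let $u \in L(\GW)$ and suppose for contradiction that $u$ is not geodesic. Let $v$ be the shortlex representative of $u$, so $l(v) < l(u)$ and $u =_G v$. Since $u \in L(\GW)$, by (i) every prefix of $u$ lies in $L(\GW)$; in particular we may assume $u$ is a shortest non-geodesic word in $L(\GW)$, so every proper prefix of $u$ is geodesic. Writing $u = u'x$ with $x \in X$, the word $u'$ is geodesic, hence by the inclusion just proved $u' \in L(W)$ up to passing to its shortlex representative; using (D2) one obtains $(u, v')^+ \in L(D)$ for a suitable $v'$ with $l(v') < l(u)$, and then (iv) forces $u \notin L(\GW)$, a contradiction. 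Hence $u$ is geodesic, i.e. $u \in \geo$. Combining the two inclusions gives $L(\GW) = \geo$. \qed

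\medskip

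The main obstacle I anticipate is the bookkeeping in the first inclusion: carefully producing a padded pair $(u,v)^+ \in L(D)$ of equal-length words with $u$ an arbitrary geodesic and $v$ its shortlex representative. Properties (D1)--(D3) are stated for pairs involving $L(W)$, so one must interpolate a chain of words from $u$ to $v$ (e.g. via successive rewrites toward shortlex normal form, each step changing length by at most one and each intermediate word related to its shortlex form through $D$), and then assemble the diagonal pieces using (D3). Once that technical point is handled, conditions (iii) and (iv) do the rest essentially mechanically, and (i)--(ii) supply the prefix-closure and coverage needed to run the minimal-counterexample argument in the second inclusion.
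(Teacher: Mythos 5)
There is a genuine gap, and it occurs at exactly the point you flagged as "bookkeeping". In your first inclusion you need the pair $(u,v)^+$ to lie in $L(D)$, where $u$ is an \emph{arbitrary} geodesic and $v$ is its shortlex normal form. Properties (D1)--(D3) do not give you this: (D2) only produces pairs whose first coordinate has the special form $u_1x$ with $u_1 \in L(W)$ and whose second coordinate is in $L(W)$, and membership in $L(D)$ is not transitive, so chaining single rewrite steps from $u$ towards $v$ (your proposed induction) does not yield $(u,v)^+ \in L(D)$; indeed such a pair need not be accepted by $D$ at all. The same defect appears in your second inclusion: you invoke (D2) with $u'$ merely geodesic rather than in $L(W)$, so you never actually produce a pair in $L(D)$ whose \emph{first} coordinate is your word $u$, which is what (iv) requires. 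Moreover, even after a correct single rewrite the replaced subword may be rewritten to one of \emph{equal} length, in which case (iv) is inapplicable and your length-minimal counterexample gives no contradiction -- one needs (iii) together with a finer (shortlex) minimality to rule that case out.

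The paper's argument is built precisely to avoid these problems, in both directions at once: take $w$ to be the \emph{shortlex-least} element of the symmetric difference under consideration, factor $w = w'uw''$ where $u$ is a subword of minimal length not in $L(W)$ (so the longest proper prefix of $u$ is in $L(W)$ and (D2) genuinely applies, giving $(u,v)^+ \in L(D)$ with $v \in L(W)$), then use (D3) to pad to $(w'uw'',w'vw'')$ or $(w'u,w'v)$, and compare $w$ with the shortlex-smaller word $w'vw''$. The case $l(u)=l(v)$ contradicts (iii) via the minimal choice of $w$, and the case $l(u)>l(v)$ uses prefix-closure (i) to force $w = w'u$ and then contradicts (iv). Your overall two-inclusion strategy and use of (i)--(iv) is in the right spirit, but without the "minimal non-$L(W)$ subword plus shortlex-least counterexample" device the key memberships in $L(D)$ that you rely on are unproved (and in general false), so the proof as written does not go through.
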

\begin{proof} Suppose that $\geo \not\subseteq L(\GW)$ and let
$w$ be the shortlex least element of $\geo \setminus L(\GW)$.
Then, by (ii), $w \not\in L(W)$. Let $w = w'uw''$, where $u$ is of minimal
length with $u \not\in L(W)$. Then, by (D2), we have $(u,v)^+ \in L(D)$ for
some $v \in L(W)$. Since by (D1) $u =_G v$, and $w \in \geo$,
we must have $l(u)=l(v)$ and hence, by (D3), $(w'uw'',w'vw'') \in L(D)$.
Moreover $w'vw'' \in \geo$.
Now, since $v \in L(W)$, we have $u > v$ (recall that we are using the shortlex
ordering on $X^*$) and hence $w = w'uw'' > w'vw''$,
and then $w'vw'' \in L(\GW)$ by choice of $w$. But this contradicts (iii).

Suppose, on the other hand, that $L(\GW) \not\subseteq \geo$ and let
$w$ be the shortlex least element of $L(\GW) \setminus \geo$.
Then $w \not\in \geo$ implies $w \not\in L(W)$, and again we can write
$w = w'uw''$, where $u$ is of minimal length with $u \not\in L(W)$.
Again we have $(u,v)^+ \in L(D)$ for some $v \in L(W)$.
If $l(u) > l(v)$ then, by (i) and the minimality of $w$, we must have $w = w'u$,
and hence, by (D3), $(w'u,w'v)^+ \in L(D)$. But this contradicts (iv).
On the other hand, if $l(u) = l(v)$ then, by (D3), $(w'uw'',w'vw'') \in L(D)$,
which contradicts (iii) again.
\end{proof}

This gives the method of testing, for a candidate automaton $\GW$, whether
$L(\GW) = \geo$.
Now we turn to the problem of constructing potential candidates for $\GW$.
 
In the methods to be described below, we can prove that if $\geo$ is
regular and if we run 
the relevant programs for sufficiently long, then $\GW$ will be correctly
calculated. Although we have no means of estimating for how long we
actually need to run the programs, we can test a sequence of candidates
for correctness, and thereby produce a terminating algorithm
to compute $\GW$ with $L(\GW) = \geo$, provided of course that
$\geo$ is regular.

As mentioned previously,
if $G$ is word-hyperbolic, then the procedure described in Section 3
of~\cite{EH01} will construct the correct automaton $\GW$.
This method can be summarized as follows. Starting from the automaton
$\GW_0 := W$, the shortlex word acceptor in the automatic structure for
$G$, we construct the automaton $\GW_1$ which accepts all words $v$ for
which there exists a $w \in L(\GW_0)$ with $l(w) = l(v)$ and with $(v,w)$ in
the language of the word-difference machine $D$. So $L(\GW_0) \subseteq
L(\GW_1) \subseteq \geo$. We can construct $\GW_i$ from $\GW_{i-1}$ in
the same way, for all $i > 0$. It can be shown that, if $L(\GW_i) =
L(\GW_{i-1})$ for some $i$, then $L(\GW_i) = \geo$, and that this
is the case if and only $G$ is word-hyperbolic.

If applied to a group that is not word-hyperbolic, then this procedure
will not terminate, and will construct a sequence $\GW_i$ ($i \in \N$) of
automata with $L(\GW_i) \subset \geo$, $L(\GW_i) \subset
L(\GW_{i+1})$, and $\geo = \cup_{i=1}^{\infty} L(\GW_i)$. So, for
any $k > 0$, there exists an $n$ such that, for all $i \ge n$, 
$L(\GW_i)$ has the following property:
\begin{mylist}
\item[($P_k$)] the set of
words of length at most $k$ in $L(\GW_i)$ is equal to the set of words of
length at most $k$ in $\geo$.
\end{mylist}

We apply a method described by
Trakhtenbrot and Barzdin in~\cite[Section IV.2]{TB73}
to construct candidates $\GW$ for an automaton that
satisfies $L(\GW) = \geo$.
This procedure, as presented in~\cite{TB73},
assumes the existence of an automaton $M$, and
takes as input a set of pairs $(w,\mathsf{b})$, where $w$ is a word
and $\mathsf{b}$ is $\mathsf{true}$ or $\mathsf{false}$,
with $\mathsf{b}=\mathsf{true}$  if and only if $w \in L(M)$.
It attempts to construct $M$ from this information.
It is proved in~\cite[Theorem 2.16]{TB73} that,
if $M$ is known to have at most $n$ states and the procedure is given the
membership of all words of length at most $2n-1$ in $L(M)$ as input,
then it will successfully construct $M$.

We shall apply a variant of this procedure using an automaton $M'$ and an
integer $k > 0$ as input, and use the membership of words of length at most
$k$ in $L(M')$ as our criterion to determine whether they lie in $L(M)$.
It will either abort or output a new automaton $c(M',k)$;
specifically, we apply it to $GW_i$, for some $i$ and $k$.

It follows from~\cite[Theorem 2.16]{TB73} that,
for any finite state automaton $M$,
there exists an integer $B(M) >0$, such that if $k \ge B(M)$ and
the sets of words of length at most $k$ in $L(M)$ and $L(M')$ are
equal, then the procedure applied to $M'$ will output $M$ as $c(M',k)$.
If $M$ has $n$ states, then this is true with $B(M)=2n-1$,
but it is proved in~\cite[Theorem 5.10]{TB73}
that there is a constant $C$ such that,
for almost all automata $M$, $B(M) \le C \log _{|A|} (n)$, where $A$
is the alphabet of $M$.

Unfortunately, we know no upper bound on the number of states
of $M$, so we have no way of estimating $B(M)$ in advance.
However, since we know that, for any $k$,
$P_k$ is satisfied by $\GW_i$ for all sufficiently
large $i$, the following is true.
If $\geo$ is a regular language and if
we calculate $\GW_i$ for sufficiently large $i$
and then apply the procedure for sufficiently large $k$, then
an automaton $\GW$ with $L(\GW) = \geo$
will be output as $c(\GW_i,k)$.

Here is a summary of our version of the Trakhtenbrot and Barzdin procedure.
For a state $\sigma$ of a deterministic finite state automaton $M$ with start
state $\sigma_0$, we define the {\em depth} of $\sigma$ to be the length of a
shortest word $w$ with $\sigma_0^w = \sigma$ (where $\sigma_0^w$ is the
state reached from $\sigma_0$ on reading the string $w$). For an
integer $k>0$, define
$M(\sigma,k)$ to be the set of words $w$ with $l(w) \le k$ such that $\sigma^w$
is an accept state of $M$.
For two states $\sigma,\tau$ of $M$, let $d$ be maximum of the depths of
$\sigma$ and $\tau$ and, for $k > d$, define $\sigma$ and $\tau$ to be
{\em $k$-equivalent} if $M(\sigma,k-d) = M(\tau,k-d)$.
Note that, since the value of $d$ depends on the pair of states
$\sigma,\tau$, $k$-equivalence is not necessarily an equivalence relation
on the set of all states of $M$, although it will be for sufficiently
large $k$.

With input $M'$ and $k$, the procedure attempts to define
an automaton $c(M',k)$  in which the set of states is the set of
$k$-equivalence classes of states of $M'$, and the transitions are induced
from those of $M'$. If $k$-equivalence turns out not to be a
equivalence relation, or if transitions with a given label from
$k$-equivalent states do not lead to $k$-equivalent states,
then the procedure aborts.  Otherwise it outputs $c(M',k)$.

Unfortunately, we do not know $B(\GW)$ in advance, and neither do we know
how large $i$ must be to guarantee that $\GW_i$ satisfies $P_k$. We have
not yet implemented any heuristics for estimating these values; we have simply
guessed at them.

We finish with some examples in which we have successfully computed $\GW$
with $L(\GW) = \geo$, together with the values of $k$ and $i$ used,
and the number of states of $\GW$. The first example is a 5-generator
presentation of the wreath product
of an infinite cyclic group with the group of order 2. There are
presentations on fewer generators with regular sets of geodesics, but
with this presentation $\geo$ also 
turns out to be 2-locally testable.
The second, third and fourth examples are 2-generator Artin groups and
they each have locally testable sets of geodesics. 
Indeed, we checked, using the conditions in Proposition~\ref{prop:ltfsa},
that they are $k$-locally testable for $k = 3,4$ and 5, respectively,
a fact which we know to be true from~\cite[Proposition 4.3]{MM06}.

The final example is a 4-generator Coxeter group for which
$\geo$ is regular but not locally testable; 
in fact it is not even star-free.
As mentioned at the beginning of this section, it is more 
efficient to use the method
described in~\cite{How93} to compute $\GW$ for this example. 

\smallskip\noindent
{\bf 1.}
$G = \langle\,a,b,t,u,v \mid ab \te ba,\, t^2 \te 1,\, tat \te b,\, at \te u,\,
 bt\te v\,\rangle$;
$i=3$, $k=4$, 10 states. 

\smallskip\noindent
{\bf 2.} $G = \langle\, a,b \mid aba=bab\,\rangle$;
$i=3$, $k=7$, 28 states. 

\smallskip\noindent
{\bf 3.} $G = \langle\, a,b \mid abab=baba\,\rangle$;
$i=4$, $k=10$, 61 states. 

\smallskip\noindent
{\bf 4.} $G = \langle\, a,b \mid ababa=babab\,\rangle$;
$i=5$, $k=12$, 115 states. 

\smallskip\noindent
{\bf 5.} $G  \te  \langle\, a,b,c,d \mid  a^2 \te b^2 \te c^2 \te d^2 \te 
(ab)^3 \te (bc)^3 \te (cd)^3 \te (da)^3 \te (ac)^2 \te (bd)^2 \te 1\,\rangle$;
$i=6$, $k=14$, 125 states.


\end{document}